\newcolumntype{C}[1]{>{\centering\arraybackslash}p{#1}}
\newcommand{\p}{^{\prime}}
\newtheorem{thm}{Theorem}
\newtheorem{prop}{Proposition}
\newtheorem{lemma}{Lemma}
\newtheorem{cor}{Corollary}
\title{Anti-Palindromic Compositions}
\author{George E. Andrews, Matthew Just, and Greg Simay}
\date{}
\begin{document}

\maketitle

\begin{abstract}
    A palindromic composition of $n$ is a composition of $n$ which can be read the same way forwards and backwards. In this paper we define an anti-palindromic composition of $n$ to be a composition of $n$ which has no mirror symmetry amongst its parts. We then give a surprising connection between the number of anti-palindromic compositions of $n$ and the so-called tribonacci sequence, a generalization of the Fibonacci sequence. We conclude by defining a new $q$-analogue of the Fibonacci sequence, which is related to certain equivalence classes of anti-palindromic compositions. 
\end{abstract}


\section{Introduction}

Let $\sigma=(\sigma_1,\sigma_2,\ldots,\sigma_s)$ be a sequence of positive integers such that $\sum \sigma_i = n$. The sequence $\sigma$ is called a \textit{composition} of $n$ of length $s$. The numbers $\sigma_i$ are called the \textit{parts} of the composition. The number of compositions of $n$ equals $2^{n-1}$, while the number of compositions of $n$ into $s$ parts equals ${n-1\choose s-1}$. The empty composition is often considered the only composition of 0, having length equal to 0.

\subsection{Palindromic and anti-palindromic compositions}

If $\sigma_i=\sigma_{s-i+1}$ for all $i$, then $\sigma$ is called a \textit{palindromic} composition. It is well known \cite{HB75} that if $pc(n)$ is the number of palindromic compositions, then $pc(n)=2^{\lfloor \frac{n}{2}\rfloor}$. For instance, the $pc(5)=4$ palindromic compositions of $5$ are \[(5), \ \ (1,3,1), \  \ (2,1,2), \  \ \text{and} \  \ (1,1,1,1,1).\] 
Recent work of the authors \cite{AS20, J20} generalize this result to compositions that are palindromic modulo $m$, where the condition $\sigma_i=\sigma_{s-i+1}$ is replaced with the weaker condition $\sigma_{i}\equiv \sigma_{s-i+1}$ (mod $m$).

If $\sigma_{i}\neq \sigma_{s-i+1}$ for all $i\neq \frac{s+1}{2}$, then we say $\sigma$ is an \textit{anti-palindromic} composition. Let $ac(n)$ be the number of anti-palindromic compositions of $n$. Then the $ac(4)=5$ anti-palindromic compositions of $4$ are \[(4), \ \ (1,3), \ \ (3,1), \ \ (1,1,2), \ \ \text{and} \ \ (2,1,1).\] Furthermore, let $ac(n,s)$ be the number of anti-palindromic compositions of $n$ of length $s$, $ac_0(n)$ be the number of anti-palindromic compositions of $n$ of even length, and $ac_1(n)$ be the number of anti-palindromic compositions of odd length (thus $ac(n)=ac_0(n)+ac_1(n)$). 

Notice that for each anti-palindromic composition of $n$ of length $s$, we can form $2^{\lfloor \frac{s}{2} \rfloor}$ \textit{flip-equivalent} anti-palindromic compositions of $n$ of length $s$ by switching any number of the pairs $\sigma_i$ and $\sigma_{s-i+1}$ ($i\neq \frac{s+1}{2}$). For instance, the anti-palindromic compositions
\[(1,3,3,2,4), \ \ (1,2,3,3,4), \ \  (4,3,3,2,1), \ \ \text{and} \ \ (4,2,3,3,1)\]
are all flip-equivalent compositions of 13 of length 5. The sets of flip-equivalent anti-palindromic compositions of $n$ form a partition of the set of all anti-palindromic compositions of $n$, and we refer to each equivalence class as a \textit{reduced anti-palindromic composition} of $n$ of length $s$. Let $rac(n)$ equal the total number of reduced anti-palindromic compositions of $n$, and $rac(n,s)$ equal the number of reduced anti-palindromic compositions of $n$ of length $s$. Furthermore, let $rac_0(n)$ and $rac_1(n)$ equal the total number of even and odd reduced anti-palindromic compositions of $n$, respectively. Clearly we have $rac(n)=rac_0(n)+rac_1(n)$. Since each equivalence class contains $2^{\lfloor \frac{s}{2}\rfloor}$ anti-palindromic compositions, it follows that
\[rac(n,s)=\frac{ac(n,s)}{2^{\lfloor \frac{s}{2}\rfloor}}.\]

Our primary results regarding the formulae for these functions come from observations made in Table \ref{tab1} and Table \ref{tab2}.

\begin{table}[]
    \centering
    \begin{tabular}{c|cccccc}
        $n$     & $ac_0(n)$   & $ac_1(n)$     & $ac(n)$  & $rac_0(n)$ & $rac_1(n)$ & $rac(n)$ \\ 
         \hline 
         $0$  & 1  & 0   & 1   &1&0& 1 \\
         $1$  & 0  & 1   & 1   &0&1& 1   \\
         $2$  & 0  & 1   & 1   &0&1& 1\\
         $3$  & 2  & 1   & 3   &1&1& 2\\
         $4$  & 2  & 3   & 5   &1&2& 3\\
         $5$  & 4  & 5   & 9   &2&3& 5\\
         $6$  & 8  & 9   & 17  &3&5& 8\\
         $7$  & 14 & 17  & 31  &5&8& 13\\
         $8$  & 26 & 31  & 57  &8&13& 21\\
         $9$  & 48 & 57  & 105 &13&21& 34\\
         $10$ & 88 & 105 & 193 &21&34& 55
    \end{tabular}
    \caption{Values of $ac_0(n)$, $ac_1(n)$, $ac(n)$, $rac_0(n)$, $rac_1(n)$, and $rac(n)$ for $n\leq 10$.}
    \label{tab1}
\end{table}

\begin{table}[]
    \centering
    \begin{tabular}{c|C{.5in} C{.5in} C{.5in} C{.5in} C{.5in} C{.5in}}
         $n$&$ac(n,0)$&$ac(n,1)$&$ac(n,2)$&$ac(n,3)$&$ac(n,4)$&$ac(n,5)$\\
         \hline
         0& 1&0&0&0&0&0\\
         1& 0&1&0&0&0&0\\
         2& 0&1&0&0&0&0\\
         3& 0&1&2&0&0&0\\
         4& 0&1&2&2&0&0\\
         5& 0&1&4&4&0&0\\
         6& 0&1&4&8&4&0\\
         7& 0&1&6&12&8&4\\
         8& 0&1&6&18&20&12
    \end{tabular}
    
    \vspace{.2in}
    
    \begin{tabular}{c|C{.5in}C{.5in}C{.5in}C{.5in}C{.5in}C{.5in}}
         $n$&$rac(n,0)$&$rac(n,1)$&$rac(n,2)$&$rac(n,3)$&$rac(n,4)$&$rac(n,5)$\\
         \hline
         0& 1&0&0&0&0&0\\
         1& 0&1&0&0&0&0\\
         2& 0&1&0&0&0&0\\
         3& 0&1&1&0&0&0\\
         4& 0&1&1&1&0&0\\
         5& 0&1&2&2&0&0\\
         6& 0&1&2&4&1&0\\
         7& 0&1&3&6&2&1\\
         8& 0&1&3&9&5&3
    \end{tabular}
    
    \caption{Values of $ac(n,s)$ (top) and $rac(n,s)$ (bottom) for $n\leq 8$.}
    \label{tab2}
\end{table}

\subsection{The $k$-binocci numbers}

Recall the $n$th Fibonacci numer is given by $f_2(n)=0$ for $n<1$, $f_2(1)=1$, and $f_2(n)=f_2(n-1)+f_2(n-2)$ for all $n\geq 2$\footnote{In some applications the offset $f_2(0)=f_2(1)=1$ is used.}. The $n$th \textit{tribonacci} number is given by $f_3(n)=0$ for $n<1$, $f_3(1)=1$, and $f_3(n)=f_3(n-1)+f_3(n-2)+f_3(n-3)$ for $n\geq 2$. The sequence begins \[0,1,1,2,4,7,13,24,\ldots\] see OEIS A000073 \cite{trib}. It has been suggested tribonacci numbers appear in Darwin's \textit{Origins of Species} in a similar relation to elephant population growth as Fibonacci numbers bear to rabbit populations \cite{eleph}. In general, we can define the $n$th $k$-binocci number by $f_k(n)=0$ for $n<1$, $f_k(1)=1$, and \[f_k(n) = \sum_{i=1}^k f_k(n-i)\] for $n>1$. Connections between $k$-bonacci numbers for various $k$ have been studied by Bravo and Luca \cite{BL13}. In a paper of Benjamin, Chinn, Scott, and Simay \cite{BCSS11}, formulae for the $k$-binocci are developed. For instance, we have
\[f_3(n+1)=\sum_{j=0}^{\lfloor n/4\rfloor} (-1)^j {n-3j\choose j} \frac{n-2j}{n-3j}2^{n-4j-1}.\]
The $k$-binocci numbers also play a role in computing the probability of flipping exactly $k$ consecutive heads in $n$ flips of a fair coin.

\subsection{Formulae for anti-palindromic compositions}

Our first result gives a surprising connection between the tribonacci numbers and anti-palindromic compositions of even length. 

\begin{thm}
    For all $n\geq 1$ we have $ac_0(n)=2\cdot f_3(n-2)$.
\end{thm}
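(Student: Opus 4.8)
The plan is to set up a recursion for $ac_0(n)$ by peeling off the outermost pair of parts, convert it into a generating-function identity, and simplify the rational function until the tribonacci generating function emerges.

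First I would establish a peeling bijection. For $n\geq 1$, every anti-palindromic composition of $n$ of even length $2t$ has $t\geq 1$, and it is uniquely recovered from the ordered outer pair $(\sigma_1,\sigma_{2t})$ — which may be \emph{any} pair of distinct positive integers — together with the inner block $(\sigma_2,\ldots,\sigma_{2t-1})$, itself an anti-palindromic composition of $n-\sigma_1-\sigma_{2t}$ of even length $2t-2$ (the empty composition when $t=1$); conversely any such data reassembles to an even-length anti-palindromic composition. Writing $c(m)$ for the number of ordered pairs of distinct positive integers summing to $m$, one checks $c(m)=2\lfloor (m-1)/2\rfloor$ (so $c(2)=0$, $c(3)=c(4)=2$, $c(5)=c(6)=4$, and so on), and the bijection gives
\[
ac_0(n)=\sum_{m=2}^{n}c(m)\,ac_0(n-m)\qquad(n\geq 1),
\]
with the convention $ac_0(0)=1$ accounting for the empty inner block.

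Next I would pass to $A_0(x)=\sum_{n\geq 0}ac_0(n)x^n$. The recursion reads $A_0(x)=1+A_0(x)C(x)$, where $C(x)=\sum_{m\geq 2}c(m)x^m$. Summing the series gives the closed form $C(x)=2(1+x)\sum_{k\geq 1}kx^{2k+1}=\dfrac{2x^3}{(1-x)^2(1+x)}$, so that
\[
A_0(x)=\frac{1}{1-C(x)}=\frac{(1-x)^2(1+x)}{(1-x)^2(1+x)-2x^3}=\frac{1-x-x^2+x^3}{1-x-x^2-x^3},
\]
after expanding $(1-x)^2(1+x)=1-x-x^2+x^3$. Recalling that the tribonacci generating function is $\sum_{n\geq 1}f_3(n)x^n=\dfrac{x}{1-x-x^2-x^3}$ (immediate from the recurrence and initial conditions), we get $\sum_{n\geq 1}2f_3(n-2)x^n=\dfrac{2x^3}{1-x-x^2-x^3}$; and since a one-line subtraction yields $A_0(x)-1=\dfrac{2x^3}{1-x-x^2-x^3}$, comparing coefficients of $x^n$ for $n\geq 1$ finishes the proof.

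I expect the main friction to be the bookkeeping in the first step: making the peeling map a genuine bijection needs care at the boundary — correctly treating the empty composition as the unique even-length anti-palindromic composition of $0$, and checking the small cases $n=1,2,3$ against $2f_3(n-2)=0,0,2$. A purely combinatorial alternative avoids generating functions: from the convolution identity, forming the second difference $ac_0(n)-ac_0(n-1)-ac_0(n-2)+ac_0(n-3)$ and using $c(m)-c(m-2)=2$ for $m\geq 4$ collapses the telescoped sums to $2\,ac_0(n-3)$, which rearranges to the tribonacci recurrence $ac_0(n)=ac_0(n-1)+ac_0(n-2)+ac_0(n-3)$ for $n\geq 4$; together with the base values this again gives $ac_0(n)=2f_3(n-2)$ by induction.
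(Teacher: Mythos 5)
Your proposal is correct, and your peeling decomposition (outer pair of distinct positive integers plus an even-length anti-palindromic inner block) is exactly the decomposition underlying both of the paper's arguments; the difference is in how the resulting convolution is processed. The paper states your generating-function identity only in the introduction, as $\frac{D}{1-D}=\frac{2q^3}{1-q-q^2-q^3}$ with $D=\left(\frac{q}{1-q}\right)^2-\frac{q^2}{1-q^2}$, and remarks that the theorem ``can be deduced by a careful inspection'' of it without carrying out the computation; your main argument is precisely that computation, made rigorous, and it is arguably the cleanest route since the rational-function algebra replaces the parity bookkeeping with $\chi$. The paper's Section 2.1 proof instead works directly with the recursion $ac_0(n)=\sum_{m=0}^{n-3}\left(n-m-1-\chi(n-m)\right)ac_0(m)$ (note $n-m-1-\chi(n-m)$ is your $c(n-m)$) and telescopes first and second differences to reach $ac_0(n)=ac_0(n-1)+ac_0(n-2)+ac_0(n-3)$; this is essentially identical to the ``combinatorial alternative'' you sketch in your last paragraph, and your verification that $c(m)-c(m-2)=2$ does collapse the sums as claimed. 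The paper's Section 2.2 proof is genuinely different from anything in your proposal: it gives an explicit bijection between compositions of $n-3$ into parts $1,2,3$ (counted by $f_3(n)$ via a separate lemma) and half of the even-length anti-palindromic compositions of $n$, which buys a combinatorial explanation for the factor of $2$ (a composition and its reverse) that the generating-function manipulation leaves opaque. Do make sure, as you anticipate, to check the base cases $n=1,2$ where $ac_0(n)=0=2f_3(n-2)$; with that, your argument is complete.
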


This theorem can be deduced by a careful inspection of the identity \[\frac{\left( \frac{q}{1-q}\right)^2- \frac{q^2}{1-q^2}}{1-\left[\left( \frac{q}{1-q}\right)^2- \frac{q^2}{1-q^2} \right]} = \frac{2q^3}{1-(q+q^2+q^3)}.\] Indeed, the left hand side is \[\sum_{n\geq1} ac_0(n) q^n\] since every even-length anti-palindromic composition is a sequence of pairs of distinct positive integers, and the right hand side is \[\sum_{n\geq 1}2\cdot f_3(n)q^{n+2}.\] We will give an algebraic (Section 2.1) and combinatorial (Section 2.2) proof of this result. Note that $ac(0)=ac_0(0)=1$, as the empty composition is vacuously anti-palindromic. Our next result gives the number of anti-palindromic compositions of $n$.

\begin{thm}
    For all $n\geq 1$,
    \[ac(n) = f_3(n)+f_3(n-2).\]
\end{thm}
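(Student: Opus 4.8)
The plan is to split $ac(n)=ac_0(n)+ac_1(n)$ and invoke Theorem~1 for the even-length count $ac_0(n)=2f_3(n-2)$, so that the whole problem reduces to showing $ac_1(n)=f_3(n)-f_3(n-2)$. For the odd-length count I would reuse the generating-function machinery set up in the discussion of Theorem~1. The key structural observation is that an anti-palindromic composition of odd length $2k+1$ is the same data as a (possibly empty) sequence of ordered pairs of distinct positive integers $(\sigma_j,\sigma_{2k+2-j})$, $j=1,\dots,k$, together with a middle part $\sigma_{k+1}=m\ge 1$; here the position-pairs $\{j,\,2k+2-j\}$ exhaust $\{1,\dots,2k+1\}\setminus\{k+1\}$. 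Since the generating function for a single distinct ordered pair weighted by $q^{\sigma_j+\sigma_{2k+2-j}}$ is $X:=\left(\frac{q}{1-q}\right)^2-\frac{q^2}{1-q^2}$, and $\sum_{n\ge1}ac_0(n)q^n=\frac{X}{1-X}$ (with $ac_0(0)=1$, so $\sum_{n\ge0}ac_0(n)q^n=\frac{1}{1-X}$), this bijection gives
\[
\sum_{n\ge1}ac_1(n)\,q^n \;=\; \frac{1}{1-X}\cdot\frac{q}{1-q}\;=\;\frac{q}{1-q}\sum_{n\ge0}ac_0(n)\,q^n,
\]
which is just the generating-function form of the recurrence $ac_1(n)=\sum_{j=0}^{n-1}ac_0(j)$ obtained by deleting the middle part.

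Next I would feed in the identity already exhibited in the excerpt, $\frac{X}{1-X}=\frac{2q^3}{1-(q+q^2+q^3)}$, which yields $\frac{1}{1-X}=1+\frac{2q^3}{1-q-q^2-q^3}=\frac{1-q-q^2+q^3}{1-q-q^2-q^3}$. Observing the factorization $1-q-q^2+q^3=(1-q)(1-q^2)$, the $(1-q)$ cancels against the middle-part factor and we are left with
\[
\sum_{n\ge1}ac_1(n)\,q^n \;=\; \frac{q-q^3}{1-q-q^2-q^3}\;=\;(1-q^2)\sum_{n\ge1}f_3(n)\,q^n,
\]
using the standard tribonacci generating function $\sum_{n\ge1}f_3(n)q^n=\frac{q}{1-q-q^2-q^3}$, which is immediate from the defining recurrence. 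Comparing coefficients of $q^n$ gives $ac_1(n)=f_3(n)-f_3(n-2)$ for all $n\ge1$, and then
\[
ac(n)=ac_0(n)+ac_1(n)=2f_3(n-2)+f_3(n)-f_3(n-2)=f_3(n)+f_3(n-2),
\]
as claimed.

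I expect the only genuine obstacle to be stating the structural bijection for odd-length compositions cleanly and checking the degenerate cases $k=0$ (the empty sequence of pairs, giving the length-one compositions) and small $n$, where one must be careful that the empty composition is counted by $ac_0(0)=1$ and not by $ac_1$; the generating-function manipulations themselves are routine. For a combinatorial rendering I would instead work directly from $ac_1(n)=\sum_{j=0}^{n-1}ac_0(j)$, substitute the closed form of Theorem~1, and evaluate the resulting partial sum of tribonacci numbers via $\sum_{i=1}^{m}f_3(i)=\tfrac12\bigl(f_3(m+2)+f_3(m)-1\bigr)$; I would likely present that as a short alternative proof or remark.
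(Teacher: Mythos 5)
Your proof is correct, but it reaches the formula $ac_1(n)=f_3(n)-f_3(n-2)$ (equivalently $f_3(n-1)+f_3(n-3)$, via the tribonacci recurrence) by a genuinely different route than the paper. The paper never touches generating functions in its proof of Theorem~2: it establishes the recurrence $ac_1(n)=\sum_{j=0}^{n-1}ac_0(j)$ through a chain of combinatorial propositions (the central-part recurrence $ac(n,2s)+ac(n,2s+1)=ac(n+1,2s+1)$, telescoped to $ac(n)=ac_1(n+1)$), then substitutes Theorem~1 and evaluates the partial sum with the identity $\sum_{j=0}^{m}f_3(j)=\tfrac12\left(f_3(m)+f_3(m+2)-1\right)$ --- which is exactly the ``alternative combinatorial rendering'' you sketch in your closing remark. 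Your main argument instead encodes the same structural decomposition (a sequence of ordered pairs of distinct positive integers, plus a free central part) as the product $\frac{1}{1-X}\cdot\frac{q}{1-q}$ and extracts the answer from the factorization $1-q-q^2+q^3=(1-q)(1-q^2)$ against the tribonacci generating function $\frac{q}{1-q-q^2-q^3}$; all of these manipulations check out, and your handling of the degenerate cases ($ac_0(0)=1$ versus $ac_1(0)=0$, and the constant term of $\frac{1}{1-X}$) is right. The generating-function route is shorter and makes the appearance of $f_3$ transparent, but it leans on the unproved-in-the-introduction identity $\frac{X}{1-X}=\frac{2q^3}{1-q-q^2-q^3}$ (easily verified, but you should verify it); the paper's route is longer but purely combinatorial and yields the intermediate propositions (notably $ac(n)=ac_1(n+1)$) as byproducts of independent interest.
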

We will prove Theorem 2 in Section 2.3, and also observe that (for $n\geq 2$)
\[ac_1(n)=f_3(n-1)+f_3(n-3).\]

In Section 2.4, we prove the following results which give the formulae for $ac(n,s)$.
\begin{thm}
    Let $s\geq 0$ be a fixed integer, and 
    \[G(q,s) = \sum_{n\geq 0} ac(n,s) q^n.\]
    Then for $|q|<1$
    \[G(q,s) = \frac{2^{\lfloor s/2\rfloor} q^{\lfloor 3s/2\rfloor} }{(1-q)^s  (1+q)^{\lfloor s/2\rfloor}}.\]
\end{thm}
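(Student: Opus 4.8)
The plan is to exploit the fact that, for a \emph{fixed} length $s$, the anti-palindromic condition decouples into independent constraints on disjoint pairs of coordinates. First I would record the structural observation: a composition $\sigma=(\sigma_1,\ldots,\sigma_s)$ of $n$ is anti-palindromic if and only if, for each $i$ with $1\le i\le \lfloor s/2\rfloor$, the ordered pair $(\sigma_i,\sigma_{s+1-i})$ consists of two \emph{distinct} positive integers, while the middle part $\sigma_{(s+1)/2}$ (present exactly when $s$ is odd) is an arbitrary positive integer. Since the constraint relating $\sigma_i$ and $\sigma_{s+1-i}$ involves no other coordinate, the set of anti-palindromic compositions of length $s$ is a direct product of $\lfloor s/2\rfloor$ copies of the set of ordered pairs of distinct positive integers, together with one extra factor (a single positive integer) when $s$ is odd. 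Hence $G(q,s)$, which is the generating function of this product set graded by the total sum, factors as the corresponding product of generating functions.

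Next I would compute the building block
\[
P(q):=\sum_{\substack{a,b\ge 1\\ a\neq b}} q^{a+b}
= \left(\frac{q}{1-q}\right)^{2}-\frac{q^{2}}{1-q^{2}}
= \frac{2q^{3}}{(1-q)^{2}(1+q)},
\]
obtained by subtracting the diagonal $a=b$ from the generating function for all ordered pairs; this is exactly the quantity on the left of the bracket in the identity displayed after Theorem 1. Both $P(q)$ and the single-part generating function $q/(1-q)$ are absolutely convergent for $|q|<1$, so all the manipulations are legitimate on that disk.

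It then remains only to assemble the pieces according to the parity of $s$. If $s=2m$ is even, then $G(q,s)=P(q)^{m}=\dfrac{2^{m}q^{3m}}{(1-q)^{2m}(1+q)^{m}}$; if $s=2m+1$ is odd, then $G(q,s)=P(q)^{m}\cdot\dfrac{q}{1-q}=\dfrac{2^{m}q^{3m+1}}{(1-q)^{2m+1}(1+q)^{m}}$. In either case I substitute $m=\lfloor s/2\rfloor$ and check that the exponent of $q$ in the numerator is $3m=\lfloor 3s/2\rfloor$ in the even case and $3m+1=\lfloor 3s/2\rfloor$ in the odd case, which gives precisely the claimed formula.

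The main obstacle here is bookkeeping rather than ideas: one must treat the two parities of $s$ uniformly and confirm that the floor-function exponents $\lfloor s/2\rfloor$ and $\lfloor 3s/2\rfloor$ are the ones that emerge, and one must carry out the little partial-fraction simplification of $P(q)$ without error. An alternative, slightly longer route would be to establish the recursion $G(q,s+2)=P(q)\,G(q,s)$ by peeling off the outermost pair $(\sigma_1,\sigma_s)$, with base cases $G(q,0)=1$ and $G(q,1)=q/(1-q)$, and then solve it; but the direct factorization above seems cleanest.
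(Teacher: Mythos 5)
Your proposal is correct and follows essentially the same route as the paper: both decompose a length-$s$ anti-palindromic composition into $\lfloor s/2\rfloor$ independent ordered pairs of distinct positive integers (plus a free central part when $s$ is odd), compute the pair generating function $\left(\frac{q}{1-q}\right)^2-\frac{q^2}{1-q^2}=\frac{2q^3}{(1-q)^2(1+q)}$, and take the appropriate product. The only difference is cosmetic — the paper writes the denominator as $(1-q^2)^a(1-q)^a$ rather than $(1-q)^{2a}(1+q)^a$ — so no further comparison is needed.
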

For instance, $G(q,0)=1$,
\[G(q,1)=\frac{q}{1-q}=q+q^2+q^3+\ldots,\]
and
\[G(q,2)=\frac{2q^3}{1-q-q^2+q^3}=2q^3+2q^4+4q^5+\ldots,\]
which give the (verifiable) formulae $ac(0,0)=1$, $ac(n,0)=0$ for $n>0$, $ac(n,1)=1$ for $n>0$, and $ac(n,2)=2\cdot\lfloor \frac{n-1}{2}\rfloor$ for $n>1$. For $s\geq 2$, we have the following corollary.

\begin{cor}
    Let $a$ be a positive integer. If $s=2a$, then
    \[ac(n,s)=\sum_{r+2t=n-3a} 2^a{a+r-1\choose r}{a+t-1\choose t}, \]
    and if $s=2a+1$, then
    \[ac(n,s)=\sum_{r+2t=n-3a} 2^a{a+r\choose r}{a+t-1\choose t}, \]
\end{cor}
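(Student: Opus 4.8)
The plan is to extract the coefficients from the generating function $G(q,s)$ in Theorem 3 by a straightforward partial-fraction-style expansion, splitting the denominator $(1-q)^s(1+q)^{\lfloor s/2\rfloor}$ into the two factors $(1-q)^s$ and $(1+q)^{\lfloor s/2\rfloor}$ and multiplying their series together. First I would treat the two parity cases separately. For $s = 2a$, Theorem 3 gives
\[
G(q,2a) = \frac{2^{a} q^{3a}}{(1-q)^{2a}(1+q)^{a}}
        = 2^{a} q^{3a}\cdot \frac{1}{(1-q)^{2a}} \cdot \frac{1}{(1+q)^{a}}.
\]
For $s = 2a+1$, since $\lfloor s/2\rfloor = a$ and $\lfloor 3s/2\rfloor = 3a+1$, Theorem 3 gives
\[
G(q,2a+1) = \frac{2^{a} q^{3a+1}}{(1-q)^{2a+1}(1+q)^{a}}
          = 2^{a} q^{3a+1}\cdot \frac{1}{(1-q)^{2a+1}} \cdot \frac{1}{(1+q)^{a}}.
\]

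Next I would recall the negative binomial series $\dfrac{1}{(1-q)^{m}} = \sum_{j\ge 0}\binom{m+j-1}{j} q^{j}$ and, applied to $\dfrac{1}{(1+q)^{a}} = \sum_{t\ge 0}\binom{a+t-1}{t}(-1)^{t} q^{t}$. Then I would observe that $\dfrac{1}{(1-q)^{2a}(1+q)^{a}} = \dfrac{1}{(1-q)^{a}(1-q^2)^{a}}$, so that
\[
\frac{1}{(1-q)^{2a}(1+q)^{a}} = \left(\sum_{r\ge 0}\binom{a+r-1}{r}q^{r}\right)\left(\sum_{t\ge 0}\binom{a+t-1}{t}q^{2t}\right),
\]
the key simplification being that pairing one factor of $(1-q)$ with $(1+q)$ produces $(1-q^2)$, which eliminates the sign alternation and yields the clean form $\binom{a+t-1}{t}q^{2t}$ seen in the corollary. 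Multiplying by the prefactor $2^{a}q^{3a}$ and reading off the coefficient of $q^{n}$ forces $r + 2t = n - 3a$, which is exactly the stated sum for $s = 2a$. For $s = 2a+1$ the only change is one extra factor of $\dfrac{1}{1-q}$, which converts $\binom{a+r-1}{r}$ into $\binom{a+r}{r}$ (equivalently $\sum_{r'+r''=r}\binom{a+r'-1}{r'}\binom{r''}{0}$ telescopes to $\binom{a+r}{r}$), and the prefactor becomes $2^{a}q^{3a+1}$, giving $r + 2t = n - (3a+1)$; but since $\lfloor 3s/2\rfloor = 3a+1$ and we want to match the corollary's index $n - 3a$, I would double-check the indexing — it appears the corollary intends $r+2t = n - \lfloor 3s/2 \rfloor$ in each case, so I would state the shift carefully.

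I do not expect any serious obstacle here; the result is a mechanical coefficient extraction from Theorem 3, and the only genuinely clever move is the regrouping $(1-q)^{2a}(1+q)^{a} = (1-q)^{a}(1-q^2)^{a}$ that avoids messy alternating-sign convolutions. The one point requiring care is the bookkeeping of the power of $q$ in the numerator across the two parity cases (and reconciling it with the exponent $\lfloor 3s/2\rfloor$ written in Theorem 3), together with verifying the Vandermonde-type identity $\sum_{j=0}^{r}\binom{a+j-1}{j} = \binom{a+r}{r}$ used to pass from $(1-q)^{2a}$ to $(1-q)^{2a+1}$ in the denominator. I would finish by spot-checking against Table \ref{tab2}: e.g. $s=2$ ($a=1$) gives $ac(n,2) = \sum_{r+2t=n-3} 2\binom{r}{r}\binom{t}{t} = 2\lfloor (n-1)/2\rfloor$, matching the row values $2,2,4,4,6,6$, and $s=4$ ($a=2$) gives $ac(6,4)=\sum_{r+2t=0}4\binom{1+r}{r}\binom{1+t}{t}=4$, as in the table.
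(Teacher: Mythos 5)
Your proof is correct and takes essentially the same route as the paper: the paper also obtains the formulas by expanding $\frac{1}{(1-q^2)^a}$ and $\frac{1}{(1-q)^a}$ (resp.\ $\frac{1}{(1-q)^{a+1}}$) as negative binomial series and convolving — indeed its intermediate expression $G(q,2a)=\frac{2^aq^{3a}}{(1-q^2)^a(1-q)^a}$ already has the denominator grouped the way your $(1-q)^{2a}(1+q)^a=(1-q)^a(1-q^2)^a$ step produces it. Your flag on the odd case is justified: since the numerator is $q^{3a+1}$, the constraint must be $r+2t=n-3a-1=n-\lfloor 3s/2\rfloor$ (the stated $r+2t=n-3a$ would give $ac(4,3)=4$ instead of the correct value $2$ from Table \ref{tab2}), so this is an indexing typo in the corollary that the paper's own terse ``reindexing gives the result'' does not surface.
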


By the observation in Section 1.1 regarding $rac(n,s)$ and $ac(n,s)$, we also have a formula for $rac(n,s)$ by dividing by the appropriate power of 2.

\subsection{An observation regarding the Fibonacci numbers}
Recall the following two $q$-analogues of the Fibonacci numbers, 

\[F_n(q)=\begin{cases}
    0 & n=0, \\
    1 & n=1, \\
    F_{n-1}(q) + q^{n-2}F_{n-2}(q)& n>1,
\end{cases}
\]
and
\[\hat{F}_n(q)=\begin{cases}
    0 & n=0, \\
    1 & n=1, \\
    \hat{F}_{n-1}(q) + q^{n-1}\hat{F}_{n-2}(q)& n>1.
\end{cases}
\]
These are referred to as $q$-analogues due to the property that $F_n(q)\rightarrow f_2(n)$ and $\hat{F}_n(q)\rightarrow f_2(n)$ as $q\rightarrow 1^-$.
Properties of these two sequences of polynomials have been studied extensively, see for instance \cite{A04,C75,C03,P06}.

We define a new $q$-analogue of the Fibonacci numbers, which will have a connection to the anti-palindromic compositions. Define
\[\phi_n(q) = \begin{cases}
    q & n=1,\\
    q & n=2,\\
    q+q^2 & n=3,\\
    \phi_{n-1}(q)+\phi_{n-2}(q)+(q^2-1)\phi_{n-3}(q) & n>3.
\end{cases}
\]
Clearly $\phi_n(q)\rightarrow f_2(n)$ as $q\rightarrow 1^-$ for all $n\geq 1$, and our final result gives a combinatorial description of the coefficients of these polynomials. For convenience, we set $\phi_0(q)=1$.

\begin{thm}
    The coefficient of $q^s$ in the polynomial $\phi_n(q)$ equals $rac(n,s)$.
\end{thm}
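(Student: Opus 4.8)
The plan is to prove the sharper statement $\sum_{s\ge 0}rac(n,s)\,q^{s}=\phi_n(q)$ for every $n\ge 0$ by comparing bivariate generating functions. Put
\[
\mathcal{R}(x,q):=\sum_{n\ge 0}\Bigl(\sum_{s\ge 0} rac(n,s)\,q^{s}\Bigr)x^{n}
\qquad\text{and}\qquad
\Phi(x):=\sum_{n\ge 0}\phi_n(q)\,x^{n}.
\]
It suffices to check that $\mathcal{R}(x,q)$ and $\Phi(x)$ are the \emph{same} rational function of $x$ and $q$; comparing the coefficient of $x^{n}$ then gives $\sum_s rac(n,s)q^s=\phi_n(q)$, and reading off the coefficient of $q^{s}$ gives the theorem.

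To evaluate $\mathcal{R}(x,q)$ I would first describe a reduced anti-palindromic composition concretely. A flip-equivalence class is obtained by independently choosing an order for each mirror pair $\{\sigma_i,\sigma_{s+1-i}\}$, $i\neq\frac{s+1}{2}$; since these entries are distinct, each class is determined precisely by the ordered tuple of \emph{unordered} pairs $\bigl(\{\sigma_1,\sigma_s\},\{\sigma_2,\sigma_{s-1}\},\dots\bigr)$, together with the middle part when $s$ is odd. Thus a reduced anti-palindromic composition of length $2a$ is an ordered sequence of $a$ unordered pairs of distinct positive integers, and one of length $2a+1$ is such a sequence followed by one arbitrary positive integer; in both cases $n$ is the total sum, and repeated pairs are allowed. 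The number of unordered pairs of distinct positive integers with sum $m$ is $\lfloor\frac{m-1}{2}\rfloor$, and
\[
\sum_{m\ge 3}\Bigl\lfloor\tfrac{m-1}{2}\Bigr\rfloor x^{m}=\frac{x^{3}}{(1-x)^{2}(1+x)}
\]
(essentially half the numerator appearing on the left of the identity used to prove Theorem~1, with $q$ replaced by $x$). Hence each pair contributes a factor $\dfrac{q^{2}x^{3}}{(1-x)^{2}(1+x)}$ and a lone middle part contributes $\dfrac{qx}{1-x}$, so
\[
\mathcal{R}(x,q)=\Bigl(1+\frac{qx}{1-x}\Bigr)\frac{1}{\,1-\dfrac{q^{2}x^{3}}{(1-x)^{2}(1+x)}\,}
=\frac{1+(q-1)x-x^{2}+(1-q)x^{3}}{1-x-x^{2}+(1-q^{2})x^{3}}.
\]
(The same rational function also falls out of Theorem~3 together with $rac(n,s)=ac(n,s)/2^{\lfloor s/2\rfloor}$, after separating the even and odd values of $s$.) In particular $\sum_s rac(0,s)q^s=1$, $\sum_s rac(1,s)q^s=\sum_s rac(2,s)q^s=q$, and $\sum_s rac(3,s)q^s=q+q^{2}$, which match $\phi_0,\phi_1,\phi_2,\phi_3$.

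For $\Phi(x)$, I would substitute the recurrence $\phi_n=\phi_{n-1}+\phi_{n-2}+(q^{2}-1)\phi_{n-3}$ (valid for $n>3$) and the initial values $\phi_0=1$, $\phi_1=\phi_2=q$, $\phi_3=q+q^{2}$ into $\Phi(x)=\sum_{n\ge 0}\phi_n(q)x^{n}$; splitting off the terms with $n\le 3$ and summing the tail $\sum_{n\ge 4}$ gives
\[
\Phi(x)\bigl(1-x-x^{2}-(q^{2}-1)x^{3}\bigr)=1+(q-1)x-x^{2}+(1-q)x^{3},
\]
which is exactly the rational function found for $\mathcal{R}(x,q)$. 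Hence $\mathcal{R}(x,q)=\Phi(x)$, and equating coefficients of $x^{n}$ gives $\sum_{s\ge 0}rac(n,s)q^{s}=\phi_n(q)$ for all $n\ge 0$; the coefficient of $q^{s}$ is then $rac(n,s)$ on the left and the coefficient of $q^{s}$ in $\phi_n(q)$ on the right, which is the theorem.

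The step I expect to cost the most care is the combinatorial reduction behind $\mathcal{R}(x,q)$: one must check that flip-equivalence really does collapse a reduced anti-palindromic composition to exactly an ordered sequence of unordered pairs of distinct positive integers, possibly followed by a middle part, so that the outer-to-inner order of the pairs and the condition $\sigma_i\neq\sigma_{s+1-i}$ are the only surviving data, and one must treat the parity of $s$ and the degenerate cases $s=0$ (the empty composition of $0$) and $s=1$ explicitly. After that, the generating-function bookkeeping and the comparison with $\Phi(x)$ are purely formal. Equivalently, the theorem amounts to the single recurrence $rac(n,s)+rac(n-3,s)=rac(n-1,s)+rac(n-2,s)+rac(n-3,s-2)$ for $n>3$, which one could instead try to establish by an explicit sign-reversing bijection; the generating-function route above sidesteps the inclusion--exclusion forced by the coefficient $q^{2}-1$.
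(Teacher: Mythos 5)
Your proof is correct, and it takes a genuinely different route from the paper. The paper proves Theorem 4 by first establishing a combinatorial recurrence for $ac(n,s)$, namely $ac(n,s)=ac(n-1,s)+ac(n-2,s)+2\,ac(n-3,s-2)-ac(n-3,s)$, via a case analysis on $m_\sigma=\sigma_1+\sigma_s$ (adding $1$ to the larger end part, $1$ to both end parts, or $\{2,1\}$ to the two end parts, with an inclusion--exclusion for the doubly counted compositions), then divides by $2^{\lfloor s/2\rfloor}$ and matches the resulting recurrence for $rac(n,s)$ against the defining recurrence of $\phi_n(q)$ coefficient by coefficient. You instead model $rac(n,s)$ directly --- an equivalence class is exactly an ordered sequence of unordered pairs of distinct positive integers, optionally followed by a middle part, which is a correct and clean observation --- and compute the closed-form bivariate generating function $\mathcal{R}(x,q)=\bigl(1+(q-1)x-x^2+(1-q)x^3\bigr)/\bigl(1-x-x^2+(1-q^2)x^3\bigr)$, which you then verify equals $\sum_n\phi_n(q)x^n$ by the standard manipulation of the three-term recurrence; I checked both rational-function computations and they are right. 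Your route sidesteps the delicate end-part case analysis of the paper's Lemma~2 entirely (it is essentially a repackaging of the Theorem~3 machinery, as you note), and it yields a stronger artifact --- an explicit bivariate generating function from which Corollary~2 follows immediately by setting $q=\pm1$ --- at the cost of being less purely combinatorial: your closing remark correctly identifies the recurrence $rac(n,s)+rac(n-3,s)=rac(n-1,s)+rac(n-2,s)+rac(n-3,s-2)$ as the content that the paper establishes bijectively and that your generating functions establish formally.
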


We will give a proof of Theorem 4 in Section 2.5. The first few polynomials $\phi_n(q)$ are given below, where the coefficients can be compared with Table \ref{tab2}.

\begin{align*}
    \phi_0(q) & = 1 & \phi_5(q)&= q+2q^2+2q^3\\
    \phi_1(q) & = q  & \phi_6(q) &= q+2q^2+4q^3+q^4\\
    \phi_2(q) & = q  & \phi_7(q) &= q+3q^2+6q^3+2q^4+q^5\\
    \phi_3(q) & = q+q^2  & \phi_8(q) &= q+3q^2+9q^3+5q^4+3q^5\\
    \phi_4(q) & =  q+q^2+q^3 & \phi_9(q) &= q+4q^2+12q^3+8q^4+8q^5+q^6.
\end{align*}
Also in Section 2.5, we deduce the following corollary.
\begin{cor}
    For $n\geq 1$ we have $rac_0(n)=f_2(n-2)$, $rac_1(n)=f_2(n-1)$, and $rac(n)=f_2(n)$.
\end{cor}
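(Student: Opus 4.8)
The plan is to derive the three identities directly from Theorem 4, which tells us that $rac(n,s) = [q^s]\phi_n(q)$. Since $rac_0(n) = \sum_{s \text{ even}} rac(n,s)$ and $rac_1(n) = \sum_{s \text{ odd}} rac(n,s)$, these are obtained by the standard even/odd coefficient extraction: $rac_0(n) = \tfrac12(\phi_n(1) + \phi_n(-1))$ and $rac_1(n) = \tfrac12(\phi_n(1) - \phi_n(-1))$, while $rac(n) = \phi_n(1)$. So the first step is simply to evaluate $\phi_n(1)$ and $\phi_n(-1)$.

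For $\phi_n(1)$: the recurrence $\phi_n(q) = \phi_{n-1}(q) + \phi_{n-2}(q) + (q^2-1)\phi_{n-3}(q)$ collapses at $q=1$ to $\phi_n(1) = \phi_{n-1}(1) + \phi_{n-2}(1)$, since the coefficient $q^2-1$ vanishes. With initial values $\phi_1(1)=\phi_2(1)=1$ and $\phi_3(1)=2$ (consistent with the recurrence, as $2 = 1+1+0$), an easy induction gives $\phi_n(1) = f_2(n)$ for $n \ge 1$ (using the offset-$1$ Fibonacci convention $f_2(1)=f_2(2)=1$, which matches $f_2(n)=f_2(n-1)+f_2(n-2)$). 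This proves $rac(n) = f_2(n)$.

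For $\phi_n(-1)$: again the term $(q^2-1)$ vanishes at $q=-1$, so $\phi_n(-1) = \phi_{n-1}(-1) + \phi_{n-2}(-1)$ for $n>3$, with $\phi_1(-1) = -1$, $\phi_2(-1) = -1$, $\phi_3(-1) = -1 + 1 = 0$. So the sequence $\phi_n(-1)$ for $n \ge 1$ runs $-1, -1, 0, -1, -1, -2, -3, -5, \ldots$, i.e. $\phi_n(-1) = -f_2(n-2)$ for $n \ge 1$ (with $f_2(-1)=1$, $f_2(0)=0$), which one checks by induction against the same linear recurrence. Substituting into the extraction formulas: $rac_0(n) = \tfrac12(f_2(n) - f_2(n-2)) = \tfrac12(f_2(n-1) + 2f_2(n-2) - f_2(n-2))$... more cleanly, $f_2(n) - f_2(n-2) = f_2(n-1)$... wait, that gives $rac_0(n) = \tfrac12 f_2(n-1)$, which is wrong. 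The correct grouping is $rac_0(n) = \tfrac12(\phi_n(1)+\phi_n(-1)) = \tfrac12(f_2(n) - f_2(n-2))$; but the Fibonacci identity $f_2(n) = f_2(n-1)+f_2(n-2)$ gives $f_2(n)-f_2(n-2) = f_2(n-1)$, so one must instead verify $rac_0(n) = f_2(n-2)$ via $\tfrac12(\phi_n(1)+\phi_n(-1))$ using the \emph{actual} small values and confirming $\phi_n(1)+\phi_n(-1) = 2f_2(n-2)$ by induction on the common recurrence with base cases $n=1,2,3$ giving $0, 0, 2$. Then $rac_1(n) = \tfrac12(\phi_n(1)-\phi_n(-1))$; base cases give $2, 2, 2$, wait $n=1$: $\tfrac12(1-(-1))=1$; so $rac_1(n) = f_2(n-1)$ by the same argument. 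The only subtlety — and the one spot to be careful — is bookkeeping the Fibonacci offset so that the base cases $n=1,2,3$ line up with $f_2(n-2)$ and $f_2(n-1)$; everything else is forced by the linear recurrence that $\phi_n$ satisfies at $q = \pm 1$. Finally, $rac(n) = rac_0(n) + rac_1(n) = f_2(n-2) + f_2(n-1) = f_2(n)$ gives an independent check, confirming consistency.
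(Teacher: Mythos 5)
Your final argument---evaluate $\phi_n(\pm 1)$, note that the $(q^2-1)$ factor kills the third term of the recurrence at $q=\pm 1$ so that $\tfrac12(\phi_n(1)\pm\phi_n(-1))$ satisfies the Fibonacci recurrence for $n>3$, and match base cases at small $n$---is exactly the paper's proof of Corollary 2. One correction to the detour you rightly retracted: the closed form is $\phi_n(-1)=-f_2(n-3)$ for $n\geq 3$ (not $-f_2(n-2)$; the recurrence for $\phi_n(-1)$ only starts at $n=4$, so the shift is set by $\phi_3(-1)=0$ and $\phi_4(-1)=-1$), which gives $rac_0(n)=\tfrac12\left(f_2(n)-f_2(n-3)\right)=f_2(n-2)$ via $f_2(n)=2f_2(n-2)+f_2(n-3)$, consistent with the induction you fall back on.
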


We summarize our results regarding $ac(n)$ and $rac(n)$ for sufficiently large $n$ below, illustrating the elegance of the formulae.

\begin{align*}
    ac_0(n)&=2\cdot f_3(n-2) & rac_0(n)&=f_2(n-2)\\
    ac_1(n)&=f_3(n-1)+f_3(n-3) & rac_1(n)&=f_2(n-1)\\
    ac(n)&=f_3(n)+f_3(n-2) & rac(n)&=f_2(n).
\end{align*}

\section{Proofs of theorems}

\subsection{Algebraic proof of Theorem 1}

In light of the fact that $ac_0(1)=ac_0(2)=0$, $ac_0(3)=ac_0(4)=2$, and $ac_0(5)=4$, we see that the theorem is true for $n<6$. Assume now $n\geq 6$. Clearly we can construct an anti-palindromic composition of $n$ from one of two fewer parts by inserting $j$ at the beginning and $k$ at the end (making sure $j\neq k$), where if the inner composition is a composition of $m$, then $j+k$ must equal $m-n$. Hence 
\[ac_0(n) = \sum_{m=0}^{n-3}\left(n-m-1-\chi(n-m)\right)ac_0(m), \]
where $\chi(j)=1$ if $j$ is even and $0$ if $j$ is odd. The term $\left(n-m-1-\chi(n-m)\right)$ accounts for the number of $j$ and $k$.
Hence 
\begin{align*}
ac_0(n)-ac_0(n-1)=&  \sum_{m=0}^{n-3}\left(n-m-1-\chi(n-m)\right)ac_0(m) \\
&- \sum_{m=0}^{n-4}\left(n-1-m-1-\chi(n-1-m)\right)ac_0(m)\\
=&\left(2-\chi(3) \right)ac_0(n-3) +\sum_{m=0}^{n-4}\left(n-m-1-\chi(n-m)\right)ac_0(m)\\ &-\sum_{m=0}^{n-4}\left(n-1-m-1-\chi(n-1-m)\right)ac_0(m) \\
=& 2ac_0(n-3) + 2\sum_{m=0}^{n-4}\chi(n-m-1)ac_0(m).
\end{align*}
Thus
\[ac_0(n)-ac_0(n-1)-2ac_0(n-3)=2\sum_{m=0}^{n-4} \chi(n-m-1)ac_0(m).\]
Let $r(n)=ac_0(n)-ac_0(n-1)-2ac_0(n-3)$. Then
\begin{align*}
   r(n)+r(n-1)=& 2\sum_{m=0}^{n-4}\chi(n-m-1)ac_0(m)\\
   &+2\sum_{m=0}^{n-5}\chi(n-m-2)ac_0(m)\\
   =& 2\sum_{m=0}^{n-5}ac_0(m)
\end{align*}
since $\chi(n)+\chi(n-1)=1$ and $\chi(3)=0$. Therefore,
\[r(n)+r(n-1)-\left(r(n-1)+r(n-2) \right) = 2ac_0(n-5),\]
and simplifying we obtain
\[ac_0(n)-m_1(n-1)-ac_0(n-2)-ac_0(n-3)=0.\]
This is the defining recurrence for $f_3(n)$, and since $2\cdot f_3(n-2)=ac_0(n)$ for $n>6$, we see that by induction $ac_0(n)=2\cdot f_3(n-2)$ for all $n\geq 1$.

\subsection{Combinatorial proof of Theorem 1}\label{comb}

We begin with a lemma regarding the tribonacci numbers.

\begin{lemma}
    For $n\geq 2$, the tribonacci number $f_3(n)$ equals number of compositions of $n-1$ with parts equal to 1, 2, or 3.
\end{lemma}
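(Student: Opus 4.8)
The plan is to give a direct bijection between compositions of $n-1$ with parts in $\{1,2,3\}$ and a combinatorial object counted by $f_3(n)$, or more simply to verify that the two quantities satisfy the same recurrence and initial conditions. Let $c(m)$ denote the number of compositions of $m$ with parts equal to $1$, $2$, or $3$, with the convention $c(0)=1$ (the empty composition). First I would check the base cases: $c(0)=1=f_3(2)$, $c(1)=1=f_3(3)$, $c(2)=2=f_3(4)$, matching the claim $f_3(n)=c(n-1)$ for $n=2,3,4$.

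Next, the key step is the standard ``first-part'' decomposition for compositions into parts from a fixed set. Any nonempty composition of $m$ with parts in $\{1,2,3\}$ is obtained by choosing its first part $j\in\{1,2,3\}$ (with $j\le m$) and following it by an arbitrary composition of $m-j$ with parts in $\{1,2,3\}$. Hence for $m\ge 3$ we get
\[
c(m) = c(m-1) + c(m-2) + c(m-3),
\]
and one checks the boundary cases $m=1,2$ directly (they also agree with this recurrence once $c$ is extended by $c(-1)=c(-2)=0$ together with $c(0)=1$). Therefore $c(m)$ satisfies exactly the tribonacci recurrence. Since $f_3(n)$ is defined by $f_3(n)=f_3(n-1)+f_3(n-2)+f_3(n-3)$ for $n>1$ with $f_3(1)=1$ and $f_3(n)=0$ for $n<1$, the shifted sequence $g(n):=c(n-1)$ satisfies the same recurrence for $n\ge 2$ and agrees with $f_3$ on the initial segment $n=2,3,4$ established above, so $g(n)=f_3(n)$ for all $n\ge 2$ by induction.

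I would also remark that this can be phrased as an explicit bijection for use in the combinatorial proof of Theorem 1: reading a composition $(\sigma_1,\dots,\sigma_s)$ of $n-1$ with parts in $\{1,2,3\}$ left to right corresponds to building up a tribonacci-counted structure by repeatedly appending a block of size $\sigma_i$, which is the usual interpretation of $f_3(n)$ as tilings of a $1\times(n-1)$ strip by tiles of length $1$, $2$, or $3$. The main obstacle, such as it is, is purely bookkeeping with the small cases and the offset: the tribonacci sequence is indexed so that $f_3(1)=1$ corresponds to the empty composition of $0$, and one must be careful that the recurrence for $c(m)$ only kicks in cleanly for $m\ge 3$ (equivalently $n\ge 4$), so the cases $n=2,3$ must be checked by hand rather than derived from the recurrence. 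Once the indexing is pinned down, the argument is routine.
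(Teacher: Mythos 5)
Your approach is the same as the paper's: check small cases, then show that the number $c(m)$ of compositions of $m$ into parts from $\{1,2,3\}$ satisfies the tribonacci recurrence via a first-part decomposition (the paper adjoins the last part instead, which is an immaterial difference), and conclude by induction. However, your base-case verification is off by one, and two of the three stated equalities are false under the paper's indexing. With $f_3(1)=1$, $f_3(2)=1$, $f_3(3)=2$, $f_3(4)=4$, the instances of the claim $f_3(n)=c(n-1)$ for $n=2,3,4$ are $c(1)=1=f_3(2)$, $c(2)=2=f_3(3)$, and $c(3)=4=f_3(4)$; what you wrote, $c(1)=1=f_3(3)$ and $c(2)=2=f_3(4)$, asserts $1=2$ and $2=4$ and amounts to checking the wrong statement $f_3(n)=c(n-2)$. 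This is precisely the bookkeeping hazard you flag at the end, and it does bite: as written, your induction is anchored to false equalities. The fix is trivial — correct the three base cases, or more cleanly extend $c$ by $c(0)=1$ and $c(-1)=c(-2)=0$ and observe that $g(n):=c(n-1)$ then agrees with $f_3$ at $n=-1,0,1$ and satisfies the same recurrence for $n\geq 2$ — after which your argument is correct and essentially identical to the paper's proof.
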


\begin{proof}
     First note that $f_3(2)=1$, $f_3(3)=2$, and $f_3(4)=4$. Since the compositions of $1$, $2$, and $3$ only consist of parts equal to 1, 2, or 3, and the number of compositions of $n$ is equal to $2^{n-1}$, the lemma holds for $n\leq 4$. Now for $n>4$, each composition of $n-1$ into parts equal to 1, 2, or 3 is formed by taking a composition of $n-4$, $n-3$, or $n-2$ and adjoining a 3, 2, or 1, respectively. Thus the number of compositions of $n-1$ into parts equal to 1, 2, or 3 is equal to $f_3(n-3)+f_3(n-2)+f_3(n-1)=f_3(n)$. \end{proof}
     
     We will now show that for $n\geq 3$ the number of compositions of $n-3$ into parts equal to 1, 2, or 3 equals the number of anti-palindromic compositions of $n$. Since $ac_0(1)=0=2\cdot f_3(-1)$ and $ac_0(2)=0=2\cdot f_3(0)$ this will establish the theorem.

\begin{proof}[Proof of Theorem 1]
     For $n=2$ we see that $ac(2)=2\cdot f_3(0) = 0$, so for any $n\geq 3$ start with a composition $\sigma$ of $n-3$ into parts equal to 1, 2, or 3. The key will be to use $\sigma$ to construct a sequence of pairs of distinct positive integers with sum equal to $n$.
    
    Now recall a \textit{partition} of $n$ is a composition of $n$ where the parts are written in non-increasing order. Let $\sigma+\tau$ denote sequence concatenation, as in $(1,2)+(4,5)=(1,2,4,5)$. For our choice of $\sigma$, we can find partitions $\lambda_1$, $\lambda_2$, \ldots, $\lambda_r$ with parts equal to 1 or 2 (or the empty partition, $\varnothing$) such that \[\sigma = \lambda_1+\sigma_2+\lambda_2\ldots+\sigma_r+\lambda_r,\] where each $\sigma_j$ is either equal to the composition $(3)$ or equal to the composition $(1,2)$. 
    
    For example, take the composition \[\sigma=(2,3,1,1,2,2,1,1,1,2,1,3)\] of 20.  Then we can decompose $\sigma$ as 
    \begin{align*}
       \lambda_1&=(2)\\
       \sigma_2&=(3)\\
       \lambda_2&=(1)\\
       \sigma_3&=(1,2)\\
       \lambda_3&=(2,1,1)\\
       \sigma_4&=(1,2)\\
       \lambda_4&=(1)\\
       \sigma_5&=(3)\\
       \lambda_5&=\varnothing.
    \end{align*}
    It is not difficult to see that this decomposition is unique; the only way a segment in the composition that is a partition with parts equal to 1 or 2 terminates is with the segment $(3)$ or the segment $(1,2)$.
    
    Now given the decomposition $\sigma=\lambda_1+\sigma_2+\lambda_2+\ldots+\sigma_r+\lambda_r$, form a sequence of pairs $(s_1,\lambda_1)$, $(s_2,\lambda_2)$, \ldots, $(s_r,\lambda_r)$ where $s_1=+3$, $s_j=+3$ if $\sigma_j=(3)$, and $s_j=-3$ if $\sigma_j=(1,2)$. For our example shown above, we have the pairs
    \[(+3,(2)), \ (+3,(1)), \ (-3,(2,1,1)), \ (-3,(1)), \ (+3,\varnothing).\]

    For each pair $(s_j,\lambda_j)$ we now form a new pair $(b_j,c_j)$ in the following way. Start with $b_j=2$ and $c_j=1$. For each 2 in the partition $\lambda_j$ increase both $b_j$ and $c_j$ by one. For each 1 in the partition $\lambda_j$ increase $b_j$ by one. We now have pairs $(b_j,c_j)$ of positive integers such that $b_j>c_j$. Now if $s_j=+3$ we are done. If $s_j=-3$, we switch the numerical values of $b_j$ and $c_j$ so that $b_j<c_j$, and then we are done. 
    
    Finally form the anti-palindromic composition $\tau=(\tau_1,\tau_2,\ldots,\tau_{2r})$ by setting $\tau_j=b_j$ and $\tau_{2r-j+1}=c_j$. Notice that though we started with a composition of $n-3$ this is a composition of $n$; the addition of 3 came from inserting $s_1=+3$. In our toy example we have \[\tau=(3,3,2,1,2,1,3,5,1,2).\]
    
    We have now embedded the compositions of $n-3$ made up of parts equal to 1, 2, or 3 into the anti-palindromic compositions of $n$. We still need to embed a second, disjoint copy. To do this we return to the pairs $(s_j,\lambda_j)$ and make a new collection of pairs $(s\p_j,\lambda_j)$ by setting $s\p_j=-s_j$. Now following the same procedure as before we construct an anti-palindromic word $\tau\p$ that is, in fact, the \textit{reverse} of $\tau$. Again looking at our example from before we have \[\tau\p = (2,1,5,3,1,2,1,2,3,3).\]
    
    To show that these two embedded sets are disjoint, notice that for a composition $\tau$ formed by using $s_1=+3$ we have $\tau_1>\tau_{2r}$, and that for a word $\tau\p$ formed by using $s_1=-3$ we have $\tau_1<\tau_{2r}$.
    
    Showing this process reverses and that we can send the pairs $\{\tau,\tau\p\}$ of an anti-palindromic composition of $n$ and its reverse back to a composition of $n-3$ with parts equal to 1, 2, or 3 is straightforward, which the reader can verify. \end{proof}
    
    \subsection{Proof of Theorem 2}
    
    In this section we develop the formula for $ac(n)$. We start by proving some initial observations regarding $ac_0(n)$, $ac_1(n)$, $ac(n)$, and $ac(n,s)$.
    
    \begin{prop}
        For all $n\geq3$, we have
        \[ac_0(n) = f_3(n-1) + f_3(n-5).\]
    \end{prop}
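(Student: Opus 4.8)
The plan is to derive Proposition 1 as a direct consequence of Theorem 1, thereby separating the combinatorial content (already supplied by Theorem 1) from a short manipulation of the tribonacci recurrence. By Theorem 1 we have $ac_0(n) = 2\,f_3(n-2)$ for every $n \ge 1$, so it suffices to establish the arithmetic identity
\[
2\,f_3(n-2) = f_3(n-1) + f_3(n-5)
\]
for $n$ in the stated range.

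To prove this, I would apply the defining recurrence $f_3(m) = f_3(m-1) + f_3(m-2) + f_3(m-3)$ twice, once at $m = n-1$ and once at $m = n-2$, obtaining
\[
f_3(n-1) = f_3(n-2) + f_3(n-3) + f_3(n-4), \qquad f_3(n-2) = f_3(n-3) + f_3(n-4) + f_3(n-5).
\]
Adding $f_3(n-5)$ to the first equation and then rewriting $f_3(n-3) + f_3(n-4) + f_3(n-5)$ as $f_3(n-2)$ via the second equation gives
\[
f_3(n-1) + f_3(n-5) = f_3(n-2) + \bigl(f_3(n-3) + f_3(n-4) + f_3(n-5)\bigr) = 2\,f_3(n-2),
\]
which is exactly the identity wanted; combined with Theorem 1 this yields $ac_0(n) = f_3(n-1) + f_3(n-5)$.

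The only delicate point is the boundary behaviour: the recurrence $f_3(m) = f_3(m-1)+f_3(m-2)+f_3(m-3)$ holds only for $m > 1$, so the two substitutions above require $n \ge 3$ and $n \ge 4$ respectively. Thus the argument gives the formula cleanly for $n \ge 4$, and the remaining small value of $n$ must be checked directly against Table~\ref{tab1} (equivalently, from the definition). I do not expect a genuine obstacle here — all the real work was done in Theorem 1, and Proposition 1 is just a re-expression of $2\,f_3(n-2)$ in the shifted form that will make the computation of $ac_1(n)$ and $ac(n)$ in the proof of Theorem 2 fall out cleanly. If one preferred not to invoke Theorem 1, the identical two-line computation can instead be applied directly to $ac_0$, using the recurrence $ac_0(n) = ac_0(n-1) + ac_0(n-2) + ac_0(n-3)$ obtained in the algebraic proof of Theorem 1 together with a check of three consecutive initial values; a bijective proof is also conceivable, splitting the even-length anti-palindromic compositions of $n$ according to whether the first part exceeds the last and matching the two classes with compositions of $n-2$ and $n-6$ into parts $1,2,3$ via Lemma~1.
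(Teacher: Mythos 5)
Your argument is the same as the paper's: expand $f_3(n-1)$ by the tribonacci recurrence, recombine $f_3(n-3)+f_3(n-4)+f_3(n-5)$ into $f_3(n-2)$, and invoke Theorem~1 to identify $2f_3(n-2)$ with $ac_0(n)$. You were right, and more careful than the paper, to flag that the second application of the recurrence (at $m=n-2$) is only legitimate for $n\geq 4$, since the recurrence $f_3(m)=f_3(m-1)+f_3(m-2)+f_3(m-3)$ is only valid for $m\geq 2$.

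However, your deferred check at $n=3$ does not come out the way you expect: there is a genuine obstacle there, because the proposition as stated is false at $n=3$. We have $ac_0(3)=2$ (the compositions $(1,2)$ and $(2,1)$), while $f_3(2)+f_3(-2)=1+0=1$. The failure is exactly at the step you isolated: recombining $f_3(0)+f_3(-1)+f_3(-2)$ into $f_3(1)$ would require the recurrence at $m=1$, where it gives $0$ rather than the defined value $f_3(1)=1$. So the identity $2f_3(n-2)=f_3(n-1)+f_3(n-5)$ holds only for $n\geq 4$, and the proposition's hypothesis should read $n\geq 4$ rather than $n\geq 3$. (The paper's own proof silently commits the same error; since Proposition~1 is not used in the later arguments, nothing downstream is affected.) Your write-up would be correct if you replaced ``I do not expect a genuine obstacle here'' with the observation that the $n=3$ case is a counterexample and the range must be adjusted accordingly.
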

    
    \begin{proof}
        This is just two applications of the defining recurrence for $f_3(n)$, recalling that $f_3(n)=0$ for $n<1$.
        \begin{align*}
            f_3(n-1)+f_3(n-5)&=f_3(n-2)+f_3(n-3)+f_3(n-4)+f_3(n-5) \\
            &= f_3(n-2) + f_3(n-2) \\
            &= 2\cdot f_3(n-2)\\
            &=ac_0(n)\qedhere
        \end{align*}
    \end{proof}
    
    \begin{prop}\label{key}
        We have $ac(0,0)=1$, $ac(0,1)=0$, and for all $n\geq 0$ and $s\geq 0$
        \[ac(n,2s)+ac(n,2s+1)=ac(n+1,2s+1).\]
    \end{prop}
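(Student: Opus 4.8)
The plan is to prove the displayed identity by a bijection that sorts the anti-palindromic compositions of $n+1$ of length $2s+1$ according to the value of their (unconstrained) central part. First I would dispose of the base cases: $ac(0,0)=1$ since the empty composition is the unique composition of $0$ of length $0$ and is vacuously anti-palindromic, while $ac(0,1)=0$ since there is no composition of $0$ with a single positive part.

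For the recurrence, fix $n\geq 0$ and $s\geq 0$ and let $\tau=(\tau_1,\ldots,\tau_{2s+1})$ be an anti-palindromic composition of $n+1$ of length $2s+1$. Here $s+1=\frac{(2s+1)+1}{2}$, so the anti-palindromic condition constrains only the pairs $\{\tau_i,\tau_{2s+2-i}\}$ with $i\neq s+1$ and leaves the central part $\tau_{s+1}$ entirely free, subject only to $\tau_{s+1}\geq 1$. I would then define: if $\tau_{s+1}=1$, delete the central part to obtain $\sigma=(\tau_1,\ldots,\tau_s,\tau_{s+2},\ldots,\tau_{2s+1})$, a composition of $n$ of length $2s$; and if $\tau_{s+1}\geq 2$, replace the central part by $\tau_{s+1}-1$ to obtain a composition of $n$ of length $2s+1$.

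The key step is to verify that the first rule is a bijection from those $\tau$ with central part $1$ onto the anti-palindromic compositions of $n$ of length $2s$, and the second is a bijection from those $\tau$ with central part $\geq 2$ onto the anti-palindromic compositions of $n$ of length $2s+1$. In both rules the pairs $\{\tau_i,\tau_{2s+2-i}\}$ for $i\neq s+1$ are untouched, so the defining inequalities survive; in the deletion rule one checks that for $j\leq s$ one has $\sigma_j=\tau_j$ and $\sigma_{2s+1-j}=\tau_{2s+2-j}$, so the even-length anti-palindromic condition $\sigma_j\neq\sigma_{2s+1-j}$ for $\sigma$ is precisely the condition $\tau_j\neq\tau_{2s+2-j}$ for $\tau$ with $j\neq s+1$. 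The inverses are equally transparent: insert a $1$ in the central slot of an anti-palindromic composition of $n$ of length $2s$, or increment the central part of one of length $2s+1$. Since every $\tau$ has $\tau_{s+1}\geq 1$, the two source sets partition all anti-palindromic compositions of $n+1$ of length $2s+1$, and comparing cardinalities yields $ac(n,2s)+ac(n,2s+1)=ac(n+1,2s+1)$.

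I do not anticipate a genuine obstacle; the only care needed is in the index bookkeeping when the central part is deleted --- confirming that the surviving conditions are exactly those defining the even-length anti-palindromic property --- and in the degenerate case $s=0$, where deleting the central part of a length-$1$ composition produces the empty composition, consistent with the singled-out value $ac(0,0)=1$.
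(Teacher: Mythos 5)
Your proposal is correct and takes essentially the same route as the paper: the paper also splits the anti-palindromic compositions of $n+1$ of length $2s+1$ according to whether the central part equals $1$ (delete it) or exceeds $1$ (decrement it), obtaining the two terms on the left-hand side. Your version merely spells out the index bookkeeping and the inverse maps that the paper leaves implicit.
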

    
    \begin{proof}
        When $n=0$, there is only one composition (the empty composition) which has length 0.
        
        Now any anti-palindromic composition $\sigma$ of $n+1$ of length $2s+1$ has a central part $\sigma_{s+1}$. If $\sigma_{s+1}=1$, this composition can be formed from an anti-palindromic composition of $n$ of length $2s$ by adding a central part equal of 1. If $\sigma_{s+1}>1$, this composition can be formed from an anti-palindromic composition of $n$ of length $2s+1$ by adding 1 to the central part. Therefore, $ac(n,2s)+ac(n,2s+1)=ac(n+1,2s+1)$. \end{proof}
        
        \begin{prop}\label{sum1}
            For $n\geq 0$ and $s\geq0$
            \[ac(n,2s+1)=\sum_{j=0}^{n-1}ac(j,2s),\]
            where in the case $n=0$ we take the empty sum to be 0.
        \end{prop}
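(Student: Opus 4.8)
The statement is an immediate consequence of Proposition \ref{key}, which I would exploit as a telescoping relation. Rewriting Proposition \ref{key} as
\[
ac(n+1,2s+1)-ac(n,2s+1)=ac(n,2s),
\]
the plan is to fix $s\geq 0$ and induct on $n$ (or, equivalently, sum this identity over $n$).

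\emph{Base case.} For $n=0$ the right-hand side is the empty sum, which we agreed to read as $0$, so I must check $ac(0,2s+1)=0$. For $s=0$ this is exactly the assertion $ac(0,1)=0$ recorded in Proposition \ref{key}; for $s\geq 1$ it holds trivially, since the only composition of $0$ is the empty composition, whose length is $0$ rather than $2s+1$.

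\emph{Inductive step.} Assume $ac(n,2s+1)=\sum_{j=0}^{n-1}ac(j,2s)$. Applying Proposition \ref{key} and then the inductive hypothesis,
\[
ac(n+1,2s+1)=ac(n,2s)+ac(n,2s+1)=ac(n,2s)+\sum_{j=0}^{n-1}ac(j,2s)=\sum_{j=0}^{n}ac(j,2s),
\]
which is the claimed formula with $n$ replaced by $n+1$. This completes the induction.

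\textbf{Expected obstacle.} There is essentially no hard step here: the only point requiring care is the base case, namely making sure that $ac(0,2s+1)=0$ for \emph{every} $s\geq 0$ and not merely for $s=0$, so that the convention ``empty sum $=0$'' is consistent with the recursion. Everything else is a routine telescoping argument built on Proposition \ref{key}.
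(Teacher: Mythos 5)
Your proof is correct and follows essentially the same route as the paper: both arguments iterate Proposition \ref{key} (the paper unwinds it $n$ times as a telescoping sum, you phrase it as induction on $n$) and both reduce to the observation that $ac(0,2s+1)=0$ for all $s\geq 0$. Your explicit justification of that base case for $s\geq 1$ is a small but welcome addition, since the paper simply asserts it.
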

        
        \begin{proof}
            Let $n>0$. Then by applying Proposition \ref{key} $n$ times, we have
           \begin{align*} ac(n,2s+1)&=ac(n-1,2s+1)+ac(n-1,2s) \\
           &=ac(n-2,2s+1)+ac(n-2,2s)+ac(n-1,2s)\\
           & \ \ \vdots\\
           &=ac(0,2s+1) + \sum_{j=0}^{n-1}ac(j,2s).
           \end{align*}
           Since $ac(0,2s+1)=0$ for all $s\geq 0$, the result follows. \end{proof}

        \begin{prop} \label{back}
            For all $n\geq 0$ \[ac(n)=ac_1(n+1).\]
        \end{prop}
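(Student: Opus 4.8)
The plan is to derive this by summing the length-graded recurrence of Proposition \ref{key} over all $s\geq 0$. First I would observe that for a fixed $n$ only finitely many of the numbers $ac(n,s)$ are nonzero, since a composition of $n$ has at most $n$ parts, so the summation is finite and raises no convergence issue. Adding the identities $ac(n,2s)+ac(n,2s+1)=ac(n+1,2s+1)$ over $s=0,1,2,\dots$ then yields
\[\sum_{s\geq 0}ac(n,2s)+\sum_{s\geq 0}ac(n,2s+1)=\sum_{s\geq 0}ac(n+1,2s+1),\]
and I would recognize the three sums, by definition, as $ac_0(n)$, $ac_1(n)$, and $ac_1(n+1)$ respectively. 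Since $ac(n)=ac_0(n)+ac_1(n)$, this is exactly the claimed identity $ac(n)=ac_1(n+1)$.

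I would also point out the bijective content behind the computation, since it explains why the statement holds rather than merely verifying it: an anti-palindromic composition of $n+1$ of odd length either has central part equal to $1$, in which case deleting that part produces an even-length anti-palindromic composition of $n$, or has central part greater than $1$, in which case subtracting $1$ from that part produces an odd-length anti-palindromic composition of $n$. Both maps are reversible, and their images are disjoint (distinguished by whether the central part equals $1$) and exhaust all odd-length anti-palindromic compositions of $n+1$; hence these are counted by $ac_0(n)+ac_1(n)=ac(n)$. This is precisely Proposition \ref{key} with the length bookkeeping suppressed.

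I do not expect any real difficulty here. The only minor point to get right is that the summation over $s$ terminates, which is immediate from the bound on the number of parts, together with the edge case $n=0$, where the identity reads $ac(0)=ac_0(0)=1=ac(1,1)=ac_1(1)$ and is already contained in the $s=0$ instance of Proposition \ref{key}.
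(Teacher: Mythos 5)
Your proof is correct and is essentially identical to the paper's: both sum the identity of Proposition \ref{key} over all $s\geq 0$ and identify the resulting sums as $ac_0(n)+ac_1(n)=ac(n)$ and $ac_1(n+1)$. Your added remarks on finiteness of the sums and the underlying bijection are sound elaborations but do not change the argument.
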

        
        \begin{proof}
            If $n=0$, we see that $ac(0)=ac_1(1)=1$. If $n>0$, by definition we have
            \begin{align*}
                ac(n)&=\sum_{s\geq 0}ac(n,s) \\
                &= \sum_{j\geq 0 }\left(ac(n,2j )+ac(n,2j+1)\right) \\
                &=\sum_{j\geq 0} ac(n+1,2j+1)
            \end{align*}
        by Proposition \ref{key}. But this last expression is equal to $ac_1(n+1)$. \end{proof}
        
        \begin{prop}\label{sum2}
            For $n\geq0$
            \[ac_1(n)=\sum_{j=0}^{n-1}ac_0(j),\] where in the case $n=0$ we take the empty sum to be 0.
        \end{prop}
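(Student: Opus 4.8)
The plan is to reduce this directly to Proposition \ref{sum1} by summing over all odd lengths and interchanging the order of summation. First I would write
\[
ac_1(n) = \sum_{s\geq 0} ac(n,2s+1),
\]
which is just the definition of $ac_1(n)$, since the odd lengths are precisely the integers of the form $2s+1$ with $s\geq 0$; note the sum is finite because $ac(n,s)=0$ once $s>n$.

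Next I would apply Proposition \ref{sum1} to each summand, replacing $ac(n,2s+1)$ by $\sum_{j=0}^{n-1} ac(j,2s)$, so that
\[
ac_1(n) = \sum_{s\geq 0}\sum_{j=0}^{n-1} ac(j,2s).
\]
Both sums are finite, so I can interchange them and recognize the inner sum $\sum_{s\geq 0} ac(j,2s)$ as $ac_0(j)$ (again by definition, the even lengths being $0,2,4,\dots$), which yields $ac_1(n)=\sum_{j=0}^{n-1} ac_0(j)$ exactly as claimed.

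Finally I would dispose of the boundary case $n=0$ separately: the right-hand side is then the empty sum, equal to $0$, and $ac_1(0)=0$ because the only composition of $0$ is the empty composition, which has (even) length $0$ and so contributes to $ac_0$, not $ac_1$. I do not expect any genuine obstacle here; the one point deserving a word of care is the legitimacy of swapping the two summations, and that is immediate since for fixed $n$ only finitely many of the terms $ac(j,2s)$ with $0\le j\le n-1$ are nonzero.
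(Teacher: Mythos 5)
Your proof is correct, but it takes a different route from the paper's. You sum Proposition \ref{sum1} over all $s\geq 0$ and interchange the (finite) double sum, so that the identity $ac_1(n)=\sum_{j=0}^{n-1}ac_0(j)$ falls out of the length-by-length statement $ac(n,2s+1)=\sum_{j=0}^{n-1}ac(j,2s)$. The paper instead goes through Proposition \ref{back}, namely $ac(n)=ac_1(n+1)$, and unrolls the resulting recursion $ac_1(n)=ac_0(n-1)+ac_1(n-1)$ down to $ac_1(0)=0$. Both arguments are ultimately consequences of Proposition \ref{key} --- yours applies the ``telescope in $n$'' step first (via Proposition \ref{sum1}) and then aggregates over $s$, while the paper aggregates over $s$ first (via Proposition \ref{back}) and then telescopes in $n$. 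Your version is arguably the more direct of the two, since it needs only Proposition \ref{sum1} and the definitions of $ac_0$ and $ac_1$, whereas the paper's ``repeating the same argument $n-2$ more times'' is an informal induction left to the reader; on the other hand, the paper's route showcases the pleasant standalone identity $ac(n)=ac_1(n+1)$, which it has other uses for. Your handling of the $n=0$ case and of the justification for swapping the sums is fine.
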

        
        \begin{proof}
            For $n>0$, we have by Proposition \ref{back}
            \begin{align*}
                ac_1(n)&=ac(n-1) \\
                &=ac_0(n-1)+ac_1(n-1) .
            \end{align*}
            Now if $n=1$, we are done since $ac_1(0)=0$. If $n>1$, we can again apply Proposition \ref{back} to get
            \[ac_1(n)=ac_0(n-1) + ac(n-2).\]
            Repeating the same argument $n-2$ more times gives the result. \end{proof}
            
        \begin{prop}\label{sumt} For all $n\geq 0$, we have
            \[\sum_{j=0}^n f_3(j) =\frac{f_3(n)+f_3(n+2)-1}{2}. \]
        \end{prop}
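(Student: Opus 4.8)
The plan is to prove the identity by induction on $n$, the sum telescoping cleanly once the defining recurrence for $f_3$ is invoked. For the base case $n=0$, the left-hand side is $f_3(0)=0$, while the right-hand side is $\frac{f_3(0)+f_3(2)-1}{2}=\frac{0+1-1}{2}=0$, so the identity holds; if one prefers a second anchor, $n=1$ gives $f_3(0)+f_3(1)=1$ on the left and $\frac{f_3(1)+f_3(3)-1}{2}=\frac{1+2-1}{2}=1$ on the right.

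For the inductive step, assume the identity holds for some fixed $n\geq 0$. Adding $f_3(n+1)$ to both sides yields
\[\sum_{j=0}^{n+1}f_3(j)=\frac{f_3(n)+f_3(n+2)+2f_3(n+1)-1}{2},\]
so it suffices to show that the numerator on the right equals $f_3(n+1)+f_3(n+3)-1$, i.e. that
\[f_3(n)+f_3(n+1)+f_3(n+2)=f_3(n+3).\]
This is exactly the defining recurrence $f_3(m)=f_3(m-1)+f_3(m-2)+f_3(m-3)$ applied at $m=n+3$, and since $n+3\geq 3$ this instance is always in range. Hence the inductive step closes and the identity holds for all $n\geq 0$.

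The only point that warrants any attention is the boundary convention for $f_3$: the recurrence is stated only for indices at least $2$, and $f_3(m)=0$ for $m<1$. In the inductive step, however, the index $n+3$ is always at least $3$, so no edge case can intervene, and I expect no genuine obstacle here. (An alternative route would be to extract the identity from the generating function $\sum_{m\geq 0}f_3(m)q^m=\frac{q}{1-q-q^2-q^3}$ by multiplying by $\frac{1}{1-q}$ and comparing coefficients, but the induction above is shorter and self-contained.)
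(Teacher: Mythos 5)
Your proof is correct and is essentially the same as the paper's: both arguments proceed by induction on $n$, with the inductive step reducing to the defining tribonacci recurrence (the paper applies it at index $n+2$ when passing from $n-1$ to $n$; you apply it at $n+3$ when passing from $n$ to $n+1$). No issues.
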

        
        \begin{proof}
            We give a proof by mathematical induction. For $n=0$, \[f_3(0)=0=\frac{f_3(0)+f_3(2)-1}{2}.\]
           Now for $n>0$, suppose the proposition holds for all $k<n$. Then 
           \begin{align*}
               \sum_{j=0}^{n} f_3(j) &= \sum_{j=0}^{n-1} f_3(j) +f_3(n) \\
               &=\frac{f_3(n-1)+f_3(n+1)-1}{2}+f_3(n) \\
               &=\frac{f_3(n+2)-f_3(n)-1}{2} +f_3(n)\\
               &=\frac{f_3(n)+f_3(n+2)-1}{2}.\qedhere
           \end{align*}
        \end{proof}
        
        \begin{prop}\label{lem}
            For all $n\geq 2$, 
            \[ac_1(n)=f_3(n-3)+f_3(n-1).\]
        \end{prop}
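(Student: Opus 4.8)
The plan is to derive the formula for $ac_1(n)$ directly from the results already established for $ac_0$ and for the summatory function of the tribonacci numbers. By Proposition~\ref{sum2} we have $ac_1(n) = \sum_{j=0}^{n-1} ac_0(j)$, so the entire task reduces to evaluating this partial sum in closed form. Since Theorem~1 gives $ac_0(j) = 2\cdot f_3(j-2)$ for $j \geq 1$ (and $ac_0(0) = 1$ is the lone exceptional value), I would split off the $j=0$ term and re-index the rest.

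Concretely, for $n \geq 2$ I would write
\[
ac_1(n) = ac_0(0) + \sum_{j=1}^{n-1} ac_0(j) = 1 + 2\sum_{j=1}^{n-1} f_3(j-2) = 1 + 2\sum_{i=-1}^{n-3} f_3(i) = 1 + 2\sum_{i=0}^{n-3} f_3(i),
\]
using $f_3(i) = 0$ for $i < 1$ to drop the negative-index terms. Now Proposition~\ref{sumt} applied with the index $n$ replaced by $n-3$ gives $\sum_{i=0}^{n-3} f_3(i) = \tfrac{1}{2}\bigl(f_3(n-3) + f_3(n-1) - 1\bigr)$, and substituting this in yields $ac_1(n) = 1 + \bigl(f_3(n-3) + f_3(n-1) - 1\bigr) = f_3(n-3) + f_3(n-1)$, as claimed. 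I would also note the base cases: at $n=2$ the formula reads $f_3(-1) + f_3(1) = 0 + 1 = 1 = ac_1(2)$, which matches Table~\ref{tab1}, and the manipulation above is valid for all $n \geq 2$ since Proposition~\ref{sumt} holds for all nonnegative arguments (here $n-3 \geq -1$; for $n=2$ one checks the degenerate case $\sum_{i=0}^{-1} f_3(i) = 0$ directly, and indeed $\tfrac12(f_3(-1)+f_3(1)-1) = 0$).

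There is essentially no obstacle here: the proposition is a bookkeeping consequence of two earlier results, and the only points requiring care are the off-by-one treatment of the anomalous value $ac_0(0) = 1$ versus the formula $2 f_3(j-2)$, and making sure the shifted application of Proposition~\ref{sumt} is legitimate at the smallest value $n = 2$. Both are handled by the explicit check above. An alternative, slightly slicker route would be to avoid re-indexing altogether: combine Proposition~\ref{back} ($ac_1(n) = ac(n-1)$) with the recurrence $ac(n) = ac_0(n) + ac_1(n)$ to get $ac_1(n) - ac_1(n-1) = ac_0(n-1) = 2 f_3(n-3)$ for $n \geq 2$, observe that $f_3(n-3) + f_3(n-1)$ satisfies the same first-order recurrence in $n$ (since $f_3(n-1) - f_3(n-2) + f_3(n-3) - f_3(n-4) = f_3(n-2) + f_3(n-4) - f_3(n-2) - f_3(n-4)$... ), and match one initial value; but the partial-sum computation above is the cleanest and is the one I would write up.
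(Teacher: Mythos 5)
Your proposal is correct and follows essentially the same route as the paper: apply Proposition~\ref{sum2}, substitute $ac_0(j)=2f_3(j-2)$ from Theorem~1 while isolating the anomalous term $ac_0(0)=1$, and then evaluate the tribonacci partial sum via Proposition~\ref{sumt}. Your explicit check of the degenerate case $n=2$ (where Proposition~\ref{sumt} is invoked at index $-1$) is a small point of extra care that the paper glosses over, but the argument is the same.
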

        
        \begin{proof}
            By Proposition \ref{sum2}, Theorem 1, and Proposition \ref{sumt} we have
            \begin{align*} ac_1(n)=&\sum_{j=0}^{n-1} ac_0(j)\\
            =& 2\sum_{j=0}^{n-1} f_3(j-2) + ac_0(0)\\
            &=2\sum_{j=0}^{n-3}f_3(j)+ac_0(0) \\
            =&f_3(n-3)+f_3(n-1)-1+ac_0(0).
            \end{align*}
            Since $ac_0(0)=1$, the result follows. \end{proof}
            
            \begin{proof}[Proof of Theorem 2] Theorem 2 now immediately follows from Proposition \ref{lem}, since $ac(1)=1=f_3(1)+f_3(-1)$, and for $n\geq 2$
            \begin{align*}
                ac(n)&=ac_0(n)+ac_1(n) \\
                &=2\cdot f_3(n-2)+f_3(n-3)+f_3(n-1)\\
                &= f_3(n)+f_3(n-2).\qedhere
            \end{align*}
            \end{proof}
    
    \subsection{Proof of Theorem 3 and Corollary 1} In this section we develop the formulae for $ac(n,s)$ by deriving the ordinary generating function $G(q,s)$ for a fixed $s\geq0$. It is easier to split into the cases when $s$ is even and odd.
    
    Suppose $s=2a$, where $a\geq0$. An anti-palindromic composition of $n$ of length $2a$ consists of a sequence of $a$ ordered pairs of distinct positive integers. If $d(n)$ is the number of distinct pairs of positive integers that sum to $n$, then
    \[D(q):=\sum_{n\geq 0}d(n) q^n = \left(\frac{q}{1-q}\right)^2 -\frac{q^2}{1-q^2}=\frac{2q^3}{(1-q^2)(1-q)}.\]
    To see why this is the case, notice that 
    \[\left( \frac{q}{1-q} \right)^2 = \left(q^{1+1}\right) + \left(q^{1+2}+q^{2+1} \right) + \left(q^{1+3} + q^{2+2} + q^{3+1} \right) + \ldots\]
    and
    \[\frac{q^2}{1-q^2} = q^{1+1}+q^{2+2}+q^{3+3}+\ldots,\]
    so we are taking all pairs of positive integers and subtracting the repeated pairs.
    
    To form a sequence of $a$ such pairs, we multiple $D(q)$ by itself $a$ times, showing that
    \[G(q,2a) = [D(q)]^a = \frac{2^aq^{3a}}{(1-q^2)^a(1-q)^a}.\]
    To prove the first half of Corollary 1, recall that for $a>0$
    \[\frac{1}{(1-q^2)^a} = \sum_{n\geq 0} {a-1+n \choose n} q^{2n}\]
    and
    \[\frac{1}{(1-q)^a}=\sum_{n\geq 0}{a-1+n \choose n} q^{n}.\]
    Multiplying these two series and reindexing gives the result.
    
    Now suppose $s=2a+1$, where $a\geq 0$. An anti-palindromic composition of $n$ of length $2a+1$ still consists of $a$ ordered pairs of distinct positive integers, with an additional central part. Therefore,
    \[G(q,2a+1)=G(q,2a)\cdot \frac{q}{1-q}=\frac{2^aq^{3a}}{(1-q^2)^a(1-q)^a}.\]
    The second half of Corollary 1 follows the same way as the first half once we observe
    \[\frac{1}{(1-q)^{a+1}}=\sum_{n\geq 0} {a+n\choose n}q^n.\]
    
    \subsection{Proof of Theorem 4 and Corollary 2} 
    
    We begin with a lemma.
    
    \begin{lemma}\label{rec}
        For $n\geq 3$ and $s\geq 2$ we have
        \[ac(n,s)=ac(n-1,s)+ac(n-2,s)+2\cdot ac(n-3,s-2)-ac(n-3,s).\]
    \end{lemma}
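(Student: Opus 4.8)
The plan is to establish the recurrence directly from the generating function $G(q,s)$ of Theorem 3, which we may assume. First I would observe that it suffices to treat the cases $s=2a$ and $s=2a+1$ separately, since the recurrence has the same shape in both. By Theorem 3, for $s \geq 2$ the denominator of $G(q,s)$ contains the factor $(1-q)^s(1+q)^{\lfloor s/2\rfloor}$, and the key point is that both $s$ and $s-2$ have the same parity, so $\lfloor (s-2)/2 \rfloor = \lfloor s/2\rfloor - 1$. Thus $G(q,s)$ and $G(q,s-2)$ differ in their prefactors only by a factor of $2q^3/((1-q)^2(1+q))$; more precisely, $G(q,s) = \dfrac{2q^3}{(1-q)^2(1+q)}\,G(q,s-2)$. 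This is just the relation $G(q,2a)=D(q)\,G(q,2a-2)$ and its odd analogue, already implicit in Section 2.4.

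Next I would translate the claimed recurrence into a polynomial identity among generating functions. The recurrence $ac(n,s)=ac(n-1,s)+ac(n-2,s)+2\,ac(n-3,s-2)-ac(n-3,s)$, valid for $n\geq 3$, is equivalent (after checking the low-order terms $n=0,1,2$ by hand, which is where the restriction $n\geq 3$ and the absence of lower-order corrections must be verified against $G(q,s)$) to
\[
(1-q-q^2+q^3)\,G(q,s) = 2q^3\,G(q,s-2).
\]
So the whole proof reduces to verifying this one identity. Since $1-q-q^2+q^3 = (1-q)(1-q^2) = (1-q)^2(1+q)$, the identity reads $(1-q)^2(1+q)\,G(q,s) = 2q^3\,G(q,s-2)$, which is exactly the relation from the previous paragraph. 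Hence the identity holds, and expanding both sides as power series and comparing coefficients of $q^n$ for $n\geq 3$ gives the recurrence.

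I expect the main obstacle to be the bookkeeping at small $n$: one must confirm that no correction terms are needed for $n=0,1,2$, i.e.\ that the polynomial identity $(1-q)^2(1+q)G(q,s) = 2q^3 G(q,s-2)$ really does reproduce the stated recurrence starting exactly at $n=3$ and not earlier or later. Concretely, since $G(q,s-2)$ begins at order $q^{\lfloor 3(s-2)/2\rfloor}$ and $2q^3 G(q,s-2)$ at order $q^{\lfloor 3s/2\rfloor}$, while the lowest-order term of $G(q,s)$ is also $q^{\lfloor 3s/2\rfloor}$ with the matching coefficient $2^{\lfloor s/2\rfloor}$, one checks that the coefficient comparison is an identity for all $n$ once the generating-function identity is known; the $n\geq 3$ hypothesis is then simply because $ac(n-3,\cdot)$ is only meaningful (nonzero) for $n\geq 3$, and for $n<3$ the recurrence would force $ac(n,s)=ac(n-1,s)+ac(n-2,s)$, which one verifies holds trivially for $s\geq 2$ since all these terms vanish. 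Alternatively, a purely combinatorial argument is available—build an anti-palindromic composition of $n$ of length $s$ by cases on the first and last parts, peeling off a pair summing to $3$ when both are small—but the generating-function route is cleaner given that Theorem 3 is already in hand.
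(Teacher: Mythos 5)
Your proof is correct, but it takes a genuinely different route from the paper. The paper proves Lemma \ref{rec} by a direct combinatorial count: it classifies anti-palindromic compositions of $n$ of length $s$ according to $m_\sigma=\sigma_1+\sigma_s$, builds those with $m_\sigma=3$ from compositions of $n-3$ of length $s-2$ (giving the $2\cdot ac(n-3,s-2)$ term), builds those with $m_\sigma>3$ from compositions of $n-1$ and $n-2$ of length $s$ by incrementing end parts, and then subtracts an overcount that is put in bijection with compositions of $n-3$ of length $s$. You instead read the recurrence off the closed form of $G(q,s)$ from Theorem 3, via the identity $(1-q)^2(1+q)\,G(q,s)=2q^3\,G(q,s-2)$, which is a correct consequence of $\lfloor s/2\rfloor-\lfloor(s-2)/2\rfloor=1$ and $\lfloor 3s/2\rfloor-\lfloor 3(s-2)/2\rfloor=3$ (equivalently, of $G(q,s)=D(q)\,G(q,s-2)$ with $D(q)=2q^3/((1-q)^2(1+q))$). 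This is not circular: Theorem 3 is established in Section 2.4 purely from the pairs-plus-optional-center description of anti-palindromic compositions, and Lemma \ref{rec} is used only afterward, in the proof of Theorem 4. Your handling of small $n$ is also sound, since with the convention $ac(m,s)=0$ for $m<0$ the coefficient identity holds for all $n$, and for $n<3$, $s\geq 2$ every term vanishes because $ac(n,s)=0$ unless $n\geq\lfloor 3s/2\rfloor$. What each approach buys: yours is shorter and essentially mechanical once Theorem 3 is in hand, and it sidesteps the somewhat delicate inclusion--exclusion bookkeeping in the paper's bijective argument; the paper's proof is independent of the closed form for $G(q,s)$ and explains each term of the recurrence combinatorially, which is arguably more illuminating even if more laborious.
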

    
    \begin{proof}
        If $\sigma$ is an anti-palindromic composition of $n\geq 3$ of length $s\geq 2$, let $m_{\sigma}:=\sigma_1+\sigma_s$. Observe that $m_{\sigma}\geq 3$ and
        \[\delta(m_{\sigma}) \leq |\sigma_1-\sigma_s|\leq m_{\sigma}-2,\]
        where $\delta(m_{\sigma})=1$ if $m_{\sigma}$ is odd and $\delta(m_{\sigma})=2$ if $m_{\sigma}$ is even.
        
        Let us first count the number of anti-palindromic compositions of $n$ of length $s$ with $m_{\sigma}=3$. Each one of these compositions can be formed by taking an anti-palindromic composition of $n-3$ of length $s-2$ and adjoining a 1 at the beginning and a 2 at the end, or a 2 at the beginning and a 1 at the end. Therefore, the number of anti-palindromic compositions of $n$ of length $s$ with $m_{\sigma}=3$ equals $2\cdot ac(n-3,s-2)$.
        
        Next we count the number of anti-palindromic compositions of $n$ of length $s$ with $m_{\sigma}>3$. Now for any anti-palindromic composition $\tau$ of $n-1$ of length $s$, we can form an anti-palindromic composition of $n$ of length $s$ by adding 1 to $\tau_1$ if $\tau_1>\tau_s$, or adding 1 to $\tau_s$ if $\tau_s>\tau_1$. Now in this way, we have constructed all the anti-palindromic compositions of $n$ of length $s$ with $m_{\sigma}>3$ and $|\sigma_1-\sigma_s|>\delta(m_{\sigma})$.
        
        For any composition $\gamma$ of $n-2$ of length $s$, form an anti-palindromic composition of $n$ of length $s$ by adding 1 to $\gamma_1$ and 1 to $\gamma_s$. In this way, we have constructed all the anti-palindromic compositions of $n$ of length $s$ with $m_{\sigma}>3$ and $|\sigma_1-\sigma_s|\leq m_{\sigma}-4$. 
        
        Therefore, the total number of anti-palindromic compositions of $n$ of length $s$ with $m_{\sigma}>3$ and $\delta(m_{\sigma})\leq |\sigma_1-\sigma_s|\leq m_{\sigma}-2$ equals $apc(n-1,s)+apc(n-2,s)$ minus the anti-palindromic compositions of $n$ of length $s$ with $m_{\sigma}>3$ and $\delta(m_{\sigma})<|\sigma_1-\sigma_s|\leq m_{\sigma}-4$, as we have counted these compositions exactly twice. To prove the lemma, we now must show that the number of compositions that we counted twice equals $apc(n-3,s)$.
        
        Let $\rho$ be an anti-palindromic composition of $n-3$ of length $s$. Form an anti-palindromic composition of $n$ of length $s$ by adding 2 to $\rho_1$ and 1 to $\rho_s$ if $\rho_1>\rho_s$, or 1 to $\rho_1$ and 2 to $\rho_s$ if $\rho_s>\rho_1$. In this way, we have constructed all of the anti-palindromic compositions of $n$ of length $s$ with $\delta(m_{\sigma})<|\sigma_1-\sigma_s|\leq m_{\sigma}-4$.
    \end{proof}
    
    \begin{proof}[Proof of Theorem 4]
    
    The theorem can be verified for all $n$ and $s$ with $n+s<5$:

    \begin{align*}
        \phi_0(q)&=rac(0,0)\cdot q^0+rac(0,1)\cdot q^1+rac(0,2)\cdot q^2 = 1\cdot q^0+0\cdot q^1+ 0\cdot q^2 \\
        \phi_1(q)&=rac(1,0)\cdot q^0+rac(1,1)\cdot q^1+rac(1,2)\cdot q^2 = 0\cdot q^0+1\cdot q^1+ 0\cdot q^2\\
        \phi_2(q)&=rac(2,0)\cdot q^0+rac(2,1)\cdot q^1+rac(2,2)\cdot q^2 = 0\cdot q^0+1\cdot q^1 + 0\cdot q^2\\
        \phi_3(q)&=rac(3,0)\cdot q^0+rac(3,1)\cdot q^1+rac(3,2)\cdot q^2 = 0\cdot q^0+1\cdot q^1 + 1\cdot q^2.
   \end{align*}

    Let $[q^s]\phi_n(s)$ be the coefficient of $q^s$ in the polynomial $\phi_n(s)$. Now for $n\geq3$ and $s\geq 2$, using the defining recurrence for $\phi_n(q)$ we have
    \begin{align*}
    [q^s]\phi_n(q)&=[q^s]\phi_{n-1}(q) + [q^s]\phi_{n-2}(q) +[q^{s-2}]\phi_{n-3}(q) - [q^s]\phi_{n-3}(q) \\
    &=rac(n-1,s)+rac(n-2,s)+ rac(n-3,s-2)-rac(n-3,s)
    \end{align*}
    by induction. Using the relationship between $rac(n,s)$ and $ac(n,s)$,
    \begin{align*}
    [q^s]\phi_n(q)&=\frac{ac(n-1,s)}{2^{\lfloor \frac{s}{2} \rfloor}}+\frac{ac(n-2,s)}{2^{\lfloor \frac{s}{2} \rfloor}}+\frac{ ac(n-3,s-2)}{2^{\lfloor \frac{s-2}{2} \rfloor}}-\frac{ac(n-3,s)}{2^{\lfloor \frac{s}{2} \rfloor}}\\
    &=\frac{ac(n-1,s)}{2^{\lfloor \frac{s}{2} \rfloor}}+\frac{ac(n-2,s)}{2^{\lfloor \frac{s}{2} \rfloor}}+\frac{ 2\cdot ac(n-3,s-2)}{2^{\lfloor \frac{s}{2} \rfloor}}-\frac{ac(n-3,s)}{2^{\lfloor \frac{s}{2} \rfloor}} \\
    &= \frac{ac(n,s)}{2^{\lfloor \frac{s}{2} \rfloor}}
    \end{align*}
    by Lemma \ref{rec}. Therefore, $[q^s]\phi_n(s) = rac(n,s)$. \end{proof}
    
    \begin{proof}[Proof of Corollary 2]
    Notice that by Theorem 4 we have \[rac(n)=\sum_{s\geq 0} rac(n,s)=\phi_n(1) = f_2(n).\]
    As for $rac_0(n)$, we have $rac_0(1)=rac_0(2)=0$, $rac_0(3)=1$, and for $n\geq 4$
    \[rac_0(n)=\sum_{s\geq 0} rac(n,2s) = \frac{\phi_n(1)+\phi_n(-1)}{2},\]
    again using Theorem 4.
    By the definition of $\phi_n(q)$, this equals
    \[\frac{\phi_{n-1}(1)+\phi_{n-1}(-1)}{2}+\frac{\phi_{n-2}(1)+\phi_{n-2}(-1)}{2}=rac_0(n-1)+rac_0(n-2).\]
    This is the defining recurrence relation for the Fibonacci numbers, thus we conclude that $rac_0(n)=f_2(n-2)$.
    
    Similarly for $rac_1(n)$, we have $rac_1(0)=1$, $rac_1(2)=rac_1(3)=1$, and for $n\geq 4$
    \[rac_1(n)=\sum_{s\geq 0} rac(n,2s+1) = \frac{\phi_n(1)-\phi_n(-1)}{2}\]
    by Theorem 4. 
    By the definition of $\phi_n(q)$, this equals
    \[\frac{\phi_{n-1}(1)-\phi_{n-1}(-1)}{2}+\frac{\phi_{n-2}(1)-\phi_{n-2}(-1)}{2}=rac_1(n-1)+rac_1(n-2).\]
    This is the defining recurrence relation for the Fibonacci numbers, thus we conclude that $rac_1(n)=f_2(n-1)$. \end{proof}
    
    \section*{Acknowledgements}
    
    The second author (M.J.) was partially supported by the Research and Training Group grant DMS-1344994 funded by the National Science Foundation. We thank Robert Schneider and Drew Sills for helpful comments.

\end{document}